\newtheorem{theorem}{Theorem}[section]
\newtheorem{lemma}[theorem]{Lemma}
\newtheorem{proposition}[theorem]{Proposition}
\theoremstyle{definition}
\newtheorem{definition}[theorem]{Definition}
\newtheorem{remark}[theorem]{Remark}
\newtheorem{example}[theorem]{Example}
\newtheorem{notation}[theorem]{Notation}
\numberwithin{equation}{section}
\DeclareMathOperator{\QQ}{\mathbb{Q}}
\DeclareMathOperator{\ZZ}{\mathbb{Z}}
\DeclareMathOperator{\OO}{\mathcal{O}}
\DeclareMathOperator{\FF}{\mathbb{F}}
\DeclareMathOperator{\gal}{Gal}
\DeclareMathOperator{\tors}{tors}
\DeclareMathOperator{\D}{D}
\DeclareMathOperator{\Sym}{S}
\begin{document}


\baselineskip=17pt


\title[Torsion of elliptic curves with rational \(j\)-invariant]{Torsion of elliptic curves over quartic number fields with rational \(j\)-invariant}

\author[L. Hamada]{Lucas Hamada}
\address{
Department of Mathematics \\
Institute of Science Tokyo \\
2-12-1, O-okayama, Meguro-ku, Tokyo 152-8551 \\
Japan
}
\email{lucas.h.r.hamada@gmail.com}

\date{}

\begin{abstract}
Let \(E\) be an elliptic curve, defined over a quartic extension \(K\) of \(\QQ\), with \(j(E) \in \QQ\). In this paper, we classify the possible group structure of the torsion subgroup \(E(K)_{\tors}\).
\end{abstract}

\subjclass[2020]{Primary 11G05; Secondary 12F05.}

\keywords{Elliptic curves, Torsion points, Number fields}

\maketitle

\section{Introduction}

    Let \(E\) be an elliptic curve defined over a number field \(K\). According to the Mordell-Weil theorem, the Mordell-Weil group \(E(K)\) of \(K\)-rational points of \(E\) is a finitely generated abelian group, which can be expressed as:
    \begin{equation*}
        E(K) \cong E(K)_{\tors} \oplus \mathbb{Z}^{r},
    \end{equation*}
    where \(E(K)_{\tors}\) denotes its torsion subgroup.
    The first major breakthrough regarding the torsion subgroup of an elliptic curve was achieved by B. Mazur, who classified, up to isomorphism, all possible torsion subgroups \(E(\QQ)_{\tors}\) of elliptic curves \(E\) defined over \(\QQ\). This result is presented in the following theorem:
    
    \begin{theorem}[B. Mazur {\cite[Theorem 8]{Mazur_modcur}}]\label{mazur_list}
    Let \(E\) be an elliptic curve defined over \(\QQ\). Then, \(E(\QQ)_{\tors}\) is isomorphic to one of the following \(15\) groups\(:\)
        \begin{align*}
           & \mathbb{Z}\slash N_1 \mathbb{Z}, &\hspace{1cm}  &  N_1 = 1,\ldots , 10, 12,\\
           & \mathbb{Z}\slash 2\mathbb{Z} \oplus \mathbb{Z} \slash 2N_2 \mathbb{Z},   
            & \hspace{1cm} & N_2 = 1,\ldots , 4.
        \end{align*}
    \end{theorem}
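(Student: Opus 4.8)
The plan is to translate the statement into the language of modular curves and then to decide, for each candidate order, whether the governing modular curve carries a suitable rational point. For $N \geq 4$ the moduli problem of pairs $(E,P)$ with $P \in E$ a point of exact order $N$ is represented by the modular curve $X_1(N)$, and its non-cuspidal $\QQ$-rational points correspond precisely to elliptic curves over $\QQ$ equipped with a rational point of order $N$; similarly the groups $\ZZ\slash 2\ZZ \oplus \ZZ\slash 2M\ZZ$ are governed by curves $X_1(2,2M)$ carrying a full level-$2$ structure together with a point of order $2M$. Thus the theorem is equivalent to the claim that these curves have non-cuspidal rational points exactly for the fifteen listed groups and for no others. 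I would first dispose of the easy inclusion: for $N \in \{1,\dots,10,12\}$ and the corresponding small $M$, the curve $X_1(N)$ has genus $0$ and carries a rational cusp, hence is isomorphic to $\mathbb{P}^1_{\QQ}$ and possesses infinitely many rational points, producing one-parameter families that realize each of the fifteen groups.

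The substance of the theorem is the reverse inclusion, which I would organize around a divisibility reduction. Since a point of order $N$ yields, by taking multiples, a point of order $d$ for every $d \mid N$, it suffices to exclude all primes $p \geq 11$ together with a short list of minimal $\{2,3,5,7\}$-smooth forbidden orders. The decisive input is the prime case: no elliptic curve over $\QQ$ has a rational point of order $p$ for any prime $p \geq 11$. Granting this, every point order is $\{2,3,5,7\}$-smooth, and one is reduced to ruling out the finitely many remaining orders (such as $14,15,16,18,20,21,24,25,27,35,49$) and the analogous $\ZZ\slash 2\ZZ \oplus \ZZ\slash 2M\ZZ$ structures with $M \geq 5$. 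Each of these corresponds to a modular curve of small positive genus — for instance $X_1(11), X_1(14), X_1(15)$ have genus $1$, while $X_1(13), X_1(16), X_1(18)$ have genus $2$ — which I would treat individually: the genus-$1$ curves are elliptic curves over $\QQ$ of Mordell--Weil rank $0$ whose only rational points are cusps, and the genus-$2$ cases can be settled by analyzing rational points on their Jacobians.

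The main obstacle is the prime case, which is Mazur's theorem on the Eisenstein ideal and would form the heart of the argument. Here a rational point of order $p$ gives a rational point on $X_1(p)$, hence on $X_0(p)$, so I would pass to $X_0(p)$ and its Jacobian $J_0(p)$ and study the Hecke algebra $\mathbb{T}$ together with its Eisenstein ideal $I$, the annihilator of the class of the degree-$0$ cuspidal divisor $(0)-(\infty)$. A hypothetical non-cuspidal rational point $x$ produces, via the Abel--Jacobi map based at a cusp, a rational point of the Eisenstein quotient $\tilde{J} = J_0(p)\slash I J_0(p)$; the key facts are that $\tilde{J}$ has Mordell--Weil rank $0$ over $\QQ$, so this point is torsion, and that the rational torsion of $J_0(p)$ is exactly the cuspidal group. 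A formal immersion argument then intervenes: for a suitable prime $\ell$ of good reduction the morphism $X_0(p) \to \tilde{J}$ is a formal immersion at the reduction of the cusp modulo $\ell$, and since reduction is injective on torsion, $x$ would reduce to the cusp modulo every such $\ell$, forcing $x$ itself to be a cusp and yielding a contradiction. Establishing the rank and torsion statements for $\tilde{J}$, through the interplay between congruences of Eisenstein series and cusp forms and the good reduction of $J_0(p)$, is the genuinely hard part; the residual genus-$1$ and genus-$2$ verifications, while needed to complete the list, are comparatively routine once the prime case is secured.
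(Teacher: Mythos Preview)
The paper does not prove this theorem at all: it is stated in the Introduction as a classical result of Mazur, with a citation to \cite{Mazur_modcur}, and is used only as background motivation. There is therefore no ``paper's own proof'' to compare against. Your proposal is a broadly accurate high-level sketch of Mazur's original argument via the Eisenstein ideal and the Eisenstein quotient of $J_0(p)$, together with the standard case-by-case elimination on small-genus modular curves; but since the paper simply quotes the result, no comparison of approaches is possible here.
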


    Several analogues of Theorem \ref{mazur_list} were proven for different classes of elliptic curves:
    Let \(\Phi(d)\) denote the set of all isomorphism classes of torsion subgroup \(E(K)_{\tors}\), where \(K\) runs through all extensions of \(\QQ\) of degree \(d\) and \(E\) runs through all elliptic curves defined over \(K\). The classification of \(\Phi(d)\) have been obtained for several values of \(d\):
    \begin{itemize}
        \item[--] \(\Phi(2)\): By S. Kamienny in \cite{kamienny_1}, and M. Kenku and F. Momose in \cite{momose_kenku}.
        \item[--] \(\Phi(3)\): By D. Jeon, C. H. Kim, and A. Schweizer in \cite{Jeon2004OnTT}, and  M. Derickx, A. Etropolski, J. Morrow, M. van Hoeij, and D. Zureick-Brown in \cite{10.2140/ant.2021.15.1837}.
        \item[--] \(\Phi(4)\): By M. Derickx and F. Najman in \cite{DF}.
    \end{itemize}
    Moreover, let \(\Phi_{\QQ}(d)\) denote the set of all isomorphism classes of torsion subgroups \(E(K)_{\tors}\), where \(E\) runs through all elliptic curves defined over \(\QQ\) and \(K\) runs through all extensions of \(\QQ\) of degree \(d\). The following classifications are known:
    \begin{itemize}
        \item[--] \(\Phi_{\QQ}(2)\) and \(\Phi_{\QQ}(3)\): By F. Najman in \cite{najman_2016},
        \item[--] \(\Phi_{\QQ}(4)\): By M. Chou in \cite{CHOU}, and E. Gonz\'{a}lez-Jim\'{e}nez and F. Najman in \cite{GonzalezJimenez2016GrowthOT}
         \item[--] \(\Phi_{\QQ}(p)\) for primes \(p \geq 5\): By Gonz\'{a}lez-Jim\'{e}nez and F. Najman in \cite{GonzalezJimenez2016CompleteCO, GonzalezJimenez2016GrowthOT}.
         \item[--] \(\Phi_{\QQ}(6)\): Conjecturally classified by T. Gu\v{z}vi\'{c} in \cite{guzvic3} and completed with the help of N. Ad\v{z}aga in \cite{Adzaga}.
         \item[--] \(\Phi_{\QQ}(d)\), for integers \(d\) whose smallest prime divisor is greater than 7, was determined by E. Gonz\'{a}lez-Jim\'{e}nez in \cite{GonzalezJimenez2016GrowthOT}.
         \item[--] \(\Phi_{\QQ}(pq)\), where \(p,q\) are primes with \(pq \neq 4,6\): By T. Gu\v{z}vi\'{c} in \cite{guzvic2}.
    \end{itemize}

    In this paper, we study the torsion subgroup structure of elliptic curves with \(j\)-invariant belonging to \(\QQ\). Let \(\Phi_{j \in \QQ}(d)\) denote the set of all isomorphism classes of torsion subgroups \(E(K)_{\tors}\), where \(K\) runs through all extensions of \(\QQ\) of degree \(d\) and \(E\) runs through all elliptic curves \(E\) defined over \(K\) with \(j(E) \in \QQ\). The first significant work on this problem was conducted by T. Gu\v{z}vi\'{c} in \cite{guzvic}, where he determined \(\Phi_{j \in \QQ}(p)\) for all prime numbers \(p\). Following this, J. E. Cremona and F. Najman in \cite{Cremona_Najman} extended the results to determine \(\Phi_{j \in \QQ}(d)\) for all \(d\) not divisible by primes less than 11.

   The goal of this paper is to determine the set \(\Phi_{j \in \QQ}(4)\). Specifically, we provide a complete description as follows:

    \begin{theorem}\label{thm: main theorem}
        Let \(E\) be an elliptic curve, defined over a quartic extension \(K\) of \(\QQ\), with \(j(E) \in \QQ\). Then, the torsion subgroup \(E(K)_{\tors}\) is isomorphic to one of the following groups\(:\)
    \begin{align*}
        & \ZZ/N_1\ZZ,     & \hspace{1cm}   & N_1 = 1,\; \ldots, 10, 12, 13, 15, 16, 17, 20, 21, 24,\\
    &\ZZ/2\ZZ \oplus \ZZ/2N_2\ZZ, &\hspace{1cm}  & N_2 = 1,\; \ldots, 6, 8,\\
    &\ZZ/3\ZZ \oplus \ZZ/3N_3\ZZ, &\hspace{1cm}  & N_3 = 1,2,\\
    &\ZZ/4\ZZ \oplus \ZZ/4N_4\ZZ, &\hspace{1cm}  & N_4 = 1,2,\\
    &\ZZ/5\ZZ \oplus \ZZ/5\ZZ,\\
    &\ZZ/6\ZZ \oplus \ZZ/6\ZZ.
        \end{align*}
    Moreover, each group in the list above occurs as the torsion subgroup \(E(K)_{\tors}\) for some elliptic curve \(E\) defined over some quartic extension \(K\) of \(\QQ\) with \(j(E) \in \QQ\).
    \end{theorem}

\subsection*{Outline of the paper}

The paper is organized as follows. In Section \ref{section: On torsion of elliptic curves over quartic fields}, we review existing results on the classification of torsion subgroups of elliptic curves over quartic fields, which will be used in the proof of Theorem \ref{thm: main theorem}. Section \ref{section: generalities on torsion points of rational elliptic curves} recalls basic properties of torsion points on elliptic curves defined over \(\QQ\). In Section \ref{section: lemma on number fields}, we establish two lemmas concerning number fields that are essential for the proof of Theorem \ref{thm: main theorem}. Section \ref{section: lemmas on elliptic curves} presents key results on elliptic curves \(E\) with \(j(E) \in \QQ \setminus \{0, 1728\}\). Finally, in Section \ref{section: proof of the main theorem}, we prove Theorem \ref{thm: main theorem} and provide examples of elliptic curves realizing each torsion subgroup listed in the theorem.

\section{On torsion of elliptic curves over quartic fields}\label{section: On torsion of elliptic curves over quartic fields}

As presented in the introduction, we already have several results on the classification of torsion subgroups of elliptic curves defined over quartic number fields. In this section, we explicitly state some of these results that will be necessary in our proof of Theorem \ref{thm: main theorem}. We begin with the sets \(\Phi(4)\) and \(\Phi_{\QQ}(4)\).

\begin{theorem}[M. Derickx and F. Najman \cite{DF}]\label{thm:DF}
        Let \(E\) be an elliptic curve, defined over a quartic extension \(K\) of \(\QQ\). Then, the torsion subgroup \(E(K)_{\tors}\) is isomorphic to one of the following groups\(:\)
    \begin{align*}
        & \ZZ/N_1\ZZ,     & \hspace{1cm}   & N_1 = 1,\; \ldots, 18, 20, 21, 22, 24,\\
    &\ZZ/2\ZZ \oplus \ZZ/2N_2\ZZ, &\hspace{1cm}  & N_2 = 1,\; \ldots, 9,\\
    &\ZZ/3\ZZ \oplus \ZZ/3N_3\ZZ, &\hspace{1cm}  & N_3 = 1, 2, 3\\
    &\ZZ/4\ZZ \oplus \ZZ/4N_4\ZZ, &\hspace{1cm}  & N_4 = 1, 2,\\
    &\ZZ/5\ZZ \oplus \ZZ/5\ZZ,\\
    &\ZZ/6\ZZ \oplus \ZZ/6\ZZ.
        \end{align*}
    \end{theorem}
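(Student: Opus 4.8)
The plan is to recast the classification as a problem about low-degree points on modular curves. For a finite abelian group $T = \ZZ/M\ZZ \oplus \ZZ/N\ZZ$ with $M \mid N$, let $X_1(M,N)$ denote the associated modular curve over $\QQ$; its non-cuspidal, non-CM points of degree $d$ correspond, up to quadratic twist, to elliptic curves $E$ over fields of degree $d$ equipped with an embedding $T \hookrightarrow E_{\tors}$. Thus the theorem is equivalent to deciding, for each candidate $T$, whether $X_1(M,N)$ carries a genuine (non-cuspidal, non-CM) point of degree dividing $4$, and to ruling out every $T$ absent from the list. I would first produce a finite pool of candidates: Merel's uniform boundedness theorem bounds $|E(K)_{\tors}|$ over quartic fields, and the determination of which primes occur as orders of torsion points in degree $\le 4$ (via Kamienny-style formal-immersion arguments, as in the work of Derickx--Kamienny--Stein--Stoll) confines the relevant primes to $\{2,3,5,7,11,13,17\}$ and bounds the prime-power exponents, yielding a finite provisional list.

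The organizing principle is a geometric dichotomy: a smooth projective curve $X/\QQ$ has infinitely many points of degree $\le 4$ only if its $\QQ$-gonality is at most $8$ (Frey), and more precisely only if it admits either a degree-$\le 4$ map to $\mathbb{P}^1$ or a map onto a positive-rank quotient abelian variety of small dimension (Abramovich--Harris, Debarre--Fahlaoui). I would therefore compute or bound the gonality of each $X_1(M,N)$ in the pool. The low-gonality curves account for all the \emph{infinite families} in the statement; for each I would exhibit an explicit degree-$\le 4$ parametrization over $\QQ$ (a quartic model, a base-changed quadratic or elliptic point, or a positive-rank elliptic quotient), which simultaneously proves that the structure is realized by infinitely many elliptic curves. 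This disposes at once of the existence half (the ``each group occurs'' clause) for the cyclic groups up to the relevant bound and for the product structures $\ZZ/2\ZZ\oplus\ZZ/2N_2\ZZ$, $\ZZ/3\ZZ\oplus\ZZ/3N_3\ZZ$, $\ZZ/4\ZZ\oplus\ZZ/4N_4\ZZ$, $\ZZ/5\ZZ\oplus\ZZ/5\ZZ$ and $\ZZ/6\ZZ\oplus\ZZ/6\ZZ$.

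For the remaining high- and borderline-gonality curves the $\QQ$-gonality exceeds $8$, so only finitely many quartic points exist and the task becomes explicit enumeration. The tools here are: computation of the Mordell--Weil group $J_1(M,N)(\QQ)$ of the Jacobian, symmetric-power Chabauty on $\mathrm{Sym}^4 X_1(M,N)$ relative to an Abel--Jacobi map, and the formal-immersion method at a well-chosen prime of good reduction to control quartic points specializing into the cuspidal locus. This step settles the delicate large-order entries $16, 17, 20, 21, 22, 24$ --- for those of small gonality it again yields an infinite family, while for the genuinely rigid ones (responsible for the sporadic appearances among $21$ and $22$) a finite computation produces the exact quartic points. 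Simultaneously it must certify the \emph{absence} of non-cuspidal, non-CM quartic points on every curve whose structure is missing, ruling out cyclic orders $19$ and $23$ together with all larger prime powers, and pruning products such as $\ZZ/5\ZZ\oplus\ZZ/10\ZZ$, $\ZZ/6\ZZ\oplus\ZZ/12\ZZ$ and $\ZZ/3\ZZ\oplus\ZZ/12\ZZ$.

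The hardest part is precisely this final enumeration on the intermediate-genus modular curves. Reliably certifying the gonality lower bounds (to guarantee finiteness) is already nontrivial, and the symmetric-fourth-power Chabauty computations are heavy and frequently obstructed: when the rank of $J_1(M,N)(\QQ)$ meets or exceeds the relevant dimension, direct Chabauty fails and one must descend to quotient curves, invoke elliptic-curve Chabauty, or use the relative symmetric Chabauty refinements of Siksek and its successors. Managing these obstructions uniformly across all borderline levels, while cleanly separating the genuine sporadic points from cusps and CM points, is the crux of the argument.
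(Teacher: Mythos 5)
Note first that the paper you are being compared against does not prove this statement at all: Theorem \ref{thm:DF} is imported verbatim from Derickx and Najman \cite{DF} and used as a black box, reducing the paper's main theorem to the exclusion of the seven groups in \eqref{list: groups to study}. So the honest comparison is between your outline and the proof in the cited work, and at the level of architecture you have reconstructed it correctly: the Kamienny-style formal-immersion bound confining the relevant primes to \(\{2,3,5,7,11,13,17\}\), the gonality dichotomy (Frey, Abramovich--Harris) separating the infinite families from the finitely-many-points regime, explicit low-degree parametrizations for the realizability half, and Mordell--Weil plus symmetric-power Chabauty computations (with Siksek-type refinements when the rank obstructs direct Chabauty) for the enumeration of sporadic quartic points and the exclusion of the missing groups. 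This is genuinely the shape of the argument in \cite{DF}.

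The gap is that what you have written is a research plan, not a proof: every step carrying actual mathematical weight is named rather than performed. You do not justify the prime pool for degree \(4\) beyond invoking the method; you compute no gonality, no Mordell--Weil group, and no Chabauty set; and you yourself flag that the symmetric-fourth-power Chabauty computations are ``frequently obstructed'' without resolving a single instance --- yet precisely these computations are what decide the delicate entries (\(\ZZ/21\ZZ\), \(\ZZ/22\ZZ\), \(\ZZ/3\ZZ\oplus\ZZ/9\ZZ\)) and certify the absences. Two smaller inaccuracies would also need repair in any write-up. First, for \(M \geq 3\) the curve \(X_1(M,N)\) is naturally defined over \(\QQ(\zeta_M)\) (a \(K\)-point forces \(\zeta_M \in K\) by the Weil pairing), so for the entries \(\ZZ/3\ZZ\oplus\ZZ/3N_3\ZZ\), \(\ZZ/4\ZZ\oplus\ZZ/4N_4\ZZ\), \(\ZZ/5\ZZ\oplus\ZZ/5\ZZ\) and \(\ZZ/6\ZZ\oplus\ZZ/6\ZZ\) the phrase ``point of degree dividing \(4\) over \(\QQ\)'' must be recast relative to the cyclotomic base field, which changes the degree bookkeeping. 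Second, the ``up to quadratic twist'' caveat is backwards: for \(N \geq 4\) the moduli problem is fine, so non-cuspidal points of \(X_1(N)\) correspond to honest pairs \((E,P)\) over the field of definition with no twist ambiguity, whereas it is the small levels where coarse-moduli subtleties require care. In short: right strategy, essentially that of \cite{DF}, but with all decisive content deferred, this cannot stand as a proof of the theorem.
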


 \begin{theorem}[M. Chou \cite{CHOU}, E. Gonz\'{a}lez-Jim\'{e}nez and F. Najman \cite{GonzalezJimenez2016GrowthOT}]\label{chou_list}
        Let \(E\) be an elliptic curve defined over \(\QQ\) and let \(K\) be a quartic extension of \(\QQ\). Then, the torsion subgroup \(E(K)_{\tors}\) is isomorphic to one of the following groups\(:\)
        \begin{align*}
        & \ZZ/N_1\ZZ,     & \hspace{1cm}   & N_1 = 1,\; \ldots, 10, 12, 13, 15, 16, 20, 24,\\
    &\ZZ/2\ZZ \oplus \ZZ/2N_2\ZZ, &\hspace{1cm}  & N_2 = 1,\; \ldots, 6, 8,\\
    &\ZZ/3\ZZ \oplus \ZZ/3N_3\ZZ, &\hspace{1cm}  & N_3 = 1,2,\\
    &\ZZ/4\ZZ \oplus \ZZ/4N_4\ZZ, &\hspace{1cm}  & N_4 = 1,2,\\
    &\ZZ/5\ZZ \oplus \ZZ/5\ZZ,\\
    &\ZZ/6\ZZ \oplus \ZZ/6\ZZ.
        \end{align*}
    \end{theorem}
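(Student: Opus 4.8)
The plan is to take the unconditional classification of \(\Phi(4)\) recorded in Theorem \ref{thm:DF} as the starting point and to decide, group by group, which of its members survive the stronger hypothesis that \(E\) be defined over \(\QQ\). Comparing that list with the one in the statement, the groups that must be excluded are exactly
\[
\ZZ/11\ZZ,\ \ZZ/14\ZZ,\ \ZZ/17\ZZ,\ \ZZ/18\ZZ,\ \ZZ/21\ZZ,\ \ZZ/22\ZZ,\ \ZZ/2\ZZ\oplus\ZZ/14\ZZ,\ \ZZ/2\ZZ\oplus\ZZ/18\ZZ,\ \ZZ/3\ZZ\oplus\ZZ/9\ZZ,
\]
and every group that remains must be realized by an explicit curve. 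Thus the argument splits into an elimination step and a construction step, and the elimination step organizes itself according to which prime-power torsion point each forbidden group forces to appear in the quartic field \(K\).

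The three groups forcing a point of order \(11\) or \(17\), namely \(\ZZ/11\ZZ\), \(\ZZ/22\ZZ\) and \(\ZZ/17\ZZ\), are disposed of immediately by Theorem \ref{exact_degree}: a point of order \(11\) (resp.\ \(17\)) on a curve over \(\QQ\) has field of definition of degree at least \(5\) (resp.\ \(8\)), and none of the admissible degrees in those rows divides \([K:\QQ]=4\). Hence no such point can lie in a quartic field, and these groups cannot occur.

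For the groups whose odd part is cyclic — \(\ZZ/14\ZZ\), \(\ZZ/18\ZZ\), \(\ZZ/21\ZZ\), \(\ZZ/2\ZZ\oplus\ZZ/14\ZZ\) and \(\ZZ/2\ZZ\oplus\ZZ/18\ZZ\) — the guiding principle is that a cyclic \(p\)-primary part is the unique subgroup of its order in \(E(K)\) and is therefore \(\gal\)-stable, so that (after passing to the Galois closure) \(E/\QQ\) carries a rational cyclic isogeny. Lemma \ref{Lemma: degree of definition, isogeny}(1) then bounds the field of definition of the relevant generator: for the \(7\)- and \(9\)-parts one finds \([\QQ(P):\QQ]\) dividing \(\gcd(\varphi(7),4)=2\) and \(\gcd(\varphi(9),4)=2\) respectively, so these parts descend to an at most quadratic field. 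Combining this with the field of definition of the \(2\)-torsion and with the Weil-pairing constraint of Lemma \ref{cn+cn_contained}, each of these groups reduces to a question about rational points of modular curves of level \(14\), \(18\), \(21\) over fields of degree at most \(4\); the classifications in lower degree, together with the classification of rational cyclic isogenies, rule them out. The case \(\ZZ/21\ZZ\) is slightly special: the cyclic \(3\)- and \(7\)-parts each force a rational isogeny, hence a rational cyclic \(21\)-isogeny, which forces \(E\) to have complex multiplication; the CM classification then shows that \(\ZZ/21\ZZ\) is attained only by a quadratic twist defined over \(K\), never by a curve already defined over \(\QQ\).

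The residual case \(\ZZ/3\ZZ\oplus\ZZ/9\ZZ\) is the one I expect to be the genuine obstacle. Here the \(3\)-primary part is \emph{not} cyclic, so Lemma \ref{cyclic_isogeny_when_cn+cnm} yields only a rational cyclic \(3\)-isogeny and there is no rational \(9\)-isogeny to feed into the degree bound; the clean descent used above therefore breaks down. What remains is the simultaneous information that \(E[3]\subseteq E(K)\) — whence \(\QQ(\zeta_3)\subseteq K\) and \([\QQ(E[3]):\QQ]\mid 4\) by Lemma \ref{cn+cn_contained} — together with a point of order \(9\) in the quartic field \(K\); one must show that these two conditions on the mod-\(9\) (equivalently \(3\)-adic) Galois image of \(E/\QQ\) are incompatible. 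This is precisely the difficulty confronted, in the twisted setting, by Lemma \ref{9-torsion}, and resolving it requires the finer analysis of the mod-\(9\) image of \(E/\QQ\) — via the classification of such images, or equivalently via rational points on the relevant modular curve — rather than the elementary degree estimates used for the other groups. Finally, for the construction step I would exhibit, for each group in the target list, an explicit curve over \(\QQ\) together with an explicit quartic field realizing it; this is a finite verification that can be read off from tables of elliptic curves with prescribed torsion growth.
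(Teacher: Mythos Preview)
The paper does not prove Theorem \ref{chou_list}: it is quoted from \cite{CHOU} and \cite{GonzalezJimenez2016GrowthOT} and used as a black box in the final step of the proof of Theorem \ref{thm: main theorem}. There is thus no in-paper argument to compare your sketch against.

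On the proposal itself: the overall plan---start from \(\Phi(4)\) and eliminate the surplus groups---is the natural one, and your disposal of orders \(11\), \(17\), \(22\) via Theorem \ref{exact_degree} is correct. Two substantive gaps remain. First, the step ``a cyclic \(p\)-primary part is the unique subgroup of its order in \(E(K)\) and is therefore \(\gal\)-stable'' is valid only when \(K/\QQ\) is Galois; when it is not, passing to the Galois closure \(\widehat K\) may enlarge the torsion and destroy the uniqueness that was driving the argument, so you do not get a rational cyclic isogeny for free. This Galois/non-Galois dichotomy is precisely why the result in the literature is split between \cite{CHOU} (Galois quartic fields) and \cite{GonzalezJimenez2016GrowthOT} (general quartic fields), with the latter requiring substantially different techniques. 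Second, you explicitly leave the \(\ZZ/3\ZZ\oplus\ZZ/9\ZZ\) case open, deferring to a ``finer analysis of the mod-\(9\) image''; without carrying that out the elimination is incomplete. The appeals to ``classifications in lower degree, together with the classification of rational cyclic isogenies'' for the \(14\), \(18\), \(21\) cases are likewise place-holders rather than arguments.
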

    
In \cite{clark_corn_rice_stankewicz_2014}, P.~L.~Clark, P.~Corn, A.~Rice, and J.~Stankewicz classify the torsion subgroups of elliptic curves with complex multiplication over number fields of degree at most~13. Here, we state their classification in the case of quartic number fields.

\begin{theorem}[P. L. Clark, P. Corn, A. Rice, and J. Stankewicz \cite{clark_corn_rice_stankewicz_2014}]\label{thm:CM}
Let \(E\) be an elliptic curve defined over a quartic extension \(K\) of \(\QQ\), with complex multiplication. Then, the torsion subgroup \(E(K)_{\mathrm{tors}}\) is isomorphic to one of the following groups\(:\)
\begin{align*}
    & \ZZ/N_1\ZZ,          & \hspace{1cm} & N_1 = 1, \ldots, 8, 10, 12, 13, 21, \\
    & \ZZ/2\ZZ \oplus \ZZ/2N_2\ZZ, & \hspace{1cm} & N_2 = 1, \ldots, 5, \\
    & \ZZ/3\ZZ \oplus \ZZ/3N_3\ZZ, & \hspace{1cm} & N_3 = 1, 2, \\
    & \ZZ/4\ZZ \oplus \ZZ/4\ZZ.
\end{align*}
\end{theorem}

\section{Generalities on torsion points of rational elliptic curves}\label{section: generalities on torsion points of rational elliptic curves}

In this section, we recall some properties of torsion points of elliptic curves defined over \(\QQ\). The main result is the list determined by E. Gonz\'{a}lez-Jimen\'{e}z and F. Najman about the possible extension degrees of the field of definition of torsion points of prime orders.

\begin{theorem}[Proposition 5.8 of \cite{GonzalezJimenez2016GrowthOT}]\label{exact_degree}
Let \(E\) be an elliptic curve defined over \(\QQ\), \(p\) a prime number and \(P \in E(\overline{\QQ})\) a point of order \(p\). Then, all of the cases below occur for \(p \leq 13\) and \(p = 37\), and they are the only on possible. The degrees in the table and list below with an asterisk occur only when \(E\) has CM.
\[\begin{array}{|c|c|}
\hline
p & [\QQ(P):\QQ]\\
\hline
2 & 1,2,3\\
\hline
3 & 1,2,3,4,6,8\\
\hline
5 & 1,2,4,5,8,10,16,20,24\\
\hline
7 & 1,2,3,6,7,9,12,14,18,21,24^{\text{*}},36,42,48\\
\hline
11 & 5,10,20^{\text{*}}, 40^{\text{*}},55, 80^{\text{*}},100^{\text{*}},110,120\\
\hline
13 & 3,4,6,12, 24^{\text{*}},39, 48^{\text{*}},52,72,78,96, 144^{\text{*}},156,168\\
\hline
37 & 12,36, 72^{\text{*}},444, 1296^{\text{*}},1332, 1368\\
\hline
\end{array}\]
For all other \(p\), the possibilities for \([\QQ(P):\QQ]\) are as is given below. The degrees in equations \((10)\)-\((12)\) occur only for CM elliptic curves \(E\) defined over \(Q\). Furthermore the degrees in equations \((12)\) occur only for elliptic curves with \(j\)-invariant \(0\). If \cite[Conjecture 3.5]{GonzalezJimenez2016GrowthOT} is true, then the degrees in equations \((13)\) also occur only for elliptic curves with \(j\)-invariant \(0\). Then
\begin{enumerate}[itemsep=0.3cm]
    \item[\((8)\)] \quad \(p^2 - 1\) \quad   for all \(p\),
    \item[\((9)\)] \quad \(8,\; 16,\; 32^{\text{*}},\; 136,\; 256^{\text{*}},\; 272,\; 288\) \quad for \(p = 17\),
    \item[\((10)\)] \quad \((p-1)/2,\;\; p-1,\;\; p(p-1)/2,\;\; p(p-1)\) \quad if \(p \in \{19,43,67, 163\}\),
    \item[\((11)\)] \quad \(2(p-1),\;\; (p-1)^2\) \quad if \(p \equiv 1 \pmod{3}\) or \(\left(\frac{-D}{p} \right)\) for any \(D \in \mathcal{CM}\),
    \item[\((12)\)] \quad \((p-1)^2/3,\;\; 2(p-1)^2/3\)\quad if \(p \equiv 4,7 \pmod{9}\),
    \item[\((13)\)]\quad \((p^2 - 1)/3,\;\; 2(p^2 - 1)/3\)\quad if \(p \equiv 2,5 \pmod{9}\),
\end{enumerate}
where \(\mathcal{CM} = \{1,2,7,11,19,43,67,163 \}\).

Apart from the cases above that have beed proven to appear, the only other options that might be possible are: 
\begin{enumerate}
    \item[\((14)\)] \quad \((p^2 - 1)/3,\;\; 2(p^2 - 1)/3\) \quad if \(p \equiv 8 \pmod{9}\).
\end{enumerate}
\end{theorem}

\begin{remark}
    Let \(E\) be an elliptic curve defined over \(\QQ\) \textit{without complex multiplication} and let \(P_{17} \in E(\overline{\QQ})\) be a point of order 17. Then, by Theorem \ref{exact_degree}, the possible values for \([\QQ(P_{17}):\QQ]\) are the following:
    \begin{equation*}
        8,\; 16,\;96,\; 136,\;192,\; 272,\; 288.
    \end{equation*}
\end{remark}

To obtain stronger results on the extension degree of the field of definition of torsion points, we use the notion of cyclic isogenies:

\begin{definition}[Cyclic isogeny]
    Let \(E_1\) and \(E_2\) be two elliptic curves defined over a field \(K\).
    \begin{enumerate}
        \item An \emph{isogeny} (over \(K\)) between \(E_1\) and \(E_2\) is a non-constant morphism \(f: E_1 \to E_2\) defined over \(K\) satisfying \(f(\OO_{E_1}) = \OO_{E_2}\).
        \item An isogeny \(f\) is a \emph{cyclic} \(n\)-\emph{isogeny} if its kernel \(f^{-1}(\OO_{E_2})\) is a cyclic group of order \(n\). In this case, we say that \(E_1\) has a cyclic \(n\)-isogeny over \(K\).
    \end{enumerate}
\end{definition}

\begin{remark}\label{cyclic n-isogeny subgroup}
    Let \(E\) be an elliptic curve over a field \(K\). Then, by \cite[Proposition III.4.12 and Remark III.4.13.2]{Silverman:1338326}, \(E\) has a cyclic \(n\)-isogeny over \(K\) if and only if there exists a cyclic subgroup \(\Phi\) of \(E(\overline{K})\) of order \(n\) which is invariant by the action of \(\gal(\overline{K}/K)\).
\end{remark}

This leads to the following results.

 \begin{lemma}[Lemmas 2.7 and 2.8 of \cite{LH}]\label{Lemma: degree of definition, isogeny}
        Let \(E\) be an elliptic curve defined over \(\QQ\), and \(K\) a finite Galois extension of \(\QQ\).
        \begin{enumerate}
            \item If \(P_n \in E(K)\) has order n and \(\left<P_n\right>\), the group generated by \(P_n\), is \(\gal(K/\QQ)\)-invariant, then \([\QQ(P_n):\QQ]\) divides \(\varphi(n)\), where \(\varphi\) is the Euler function.
            \item If \(E(K)[p^n] = \left<P_{p^n},Q_p\right> \cong \ZZ/p^n\ZZ \oplus \ZZ/p\ZZ \), where \(p\) is a prime number coprime to the degree \([K:\QQ]\), \(P_{p^n} \in E(K)[p^n] - E(\QQ)\) is a point of order \(p^n\) that is not defined in \(\QQ\), and \(Q_p \in E(\QQ)[p]\) is a point of order \(p\), then \([\QQ(P_{p^n}):\QQ]\) divides \(p-1\). 
        \end{enumerate}
    \end{lemma}

    In \cite[Lemma 3.10]{CHOU}, M. Chou provided a useful criterion for determining when an elliptic curve admits a cyclic isogeny. We now present a slight generalization of this criterion.

    \begin{lemma}\label{cyclic_isogeny_when_cn+cnm}
    Let \(K\) be a Galois extension of \(\QQ\), and let \(E\) be an elliptic curve defined over \(\QQ\) such that \(E(K)[nm] \cong \ZZ/nm\ZZ \oplus \ZZ/m\ZZ\) for some positives integers \(n\) and \(m\) with \(n \geq 2\). Then, \(E\) has a cyclic \(n\)-isogeny over \(\QQ\).
    \end{lemma}

    \begin{proof}
        Let \(\Phi \coloneqq [m]\left(E(K)[nm]\right)\). Then, \(\Phi \cong \ZZ/n\ZZ\) and since \(K/\QQ\) is a Galois extension, \(\Phi\) is invariant by the action of \(\gal(\overline{\QQ}/\QQ)\). By Remark \ref{cyclic n-isogeny subgroup}, we conclude that \(E\) has a cyclic \(n\)-isogeny over \(\QQ\).
    \end{proof}

\section{Lemmas on number fields}\label{section: lemma on number fields}

    This section establishes two lemmas on number fields for use in the proof of Theorem \ref{thm: main theorem}. First, we prove a condition under which a quadratic extension of \(\QQ\) is contained in a quartic cyclic extension of \(\QQ\). Specifically, we prove the following:

     \begin{lemma}\label{quartic_quadratic_cor 2}
    Let \(K\) be a quartic cyclic extension of \(\QQ\) and \(F\) be its unique non-trivial subfield. Then, \(F = \QQ(\sqrt{n})\) for some positive integer \(n = 2^a p_1 p_2\newline \cdots p_k\), where \(a \in \{0,1\}\) and \(p_i\) are primes satisfying \(p_i \equiv 1 \pmod{4}\). In particular, \(F\) is not equal to neither \(\QQ(\sqrt{-1})\), \(\QQ(\sqrt{-3})\) nor \(\QQ(\sqrt{3})\).
    \end{lemma}

    To prove this, we need the follwing theorem by A.-M. Legendre.

    \begin{theorem}[A.-M. Legendre. See {\cite[Theorem 2, Chap IV, App. I]{Weil}}]\label{legendre}
    Let \(a,b,c\) be three integers, not all of the same sign and such that \(abc\) is square-free. Then, the equation \[aX^2 + bY^2 + cZ^2 = 0\] has a solution in integers \((X,Y,Z) \neq (0,0,0)\) if and only if \(-bc, -ca, -ab\) are quadratic residues modulo \(|a|\), modulo \(|b|\) and modulo \(|c|\), respectively.
    \end{theorem}

     \begin{proof}[Proof of Lemma \ref{quartic_quadratic_cor 2}]
    By \cite[Lemma 3.1]{CHOU}, a quadratic extension \(F\) of \(\QQ\) is an intermediate field of a quartic cyclic extension of \(\QQ\) if and only if \(F = \QQ(\sqrt{n})\) for some positive square-free integer \(n\) such that there exist non-zero integers \(a,b\) and \(c\) satisfying 
    \begin{equation*}
        a^2 - nb^2 = nc^2.
    \end{equation*} 
     Since \(n\) is square-free, this is equivalent to the existence of a non-zero integer solution of the diophantine equation 
    \begin{equation*}
        nX^2 - Y^2 - Z^2 = 0.
    \end{equation*}
    By Theorem \ref{legendre}, this is equivalent to \(-1\) be a quadratic residue modulo \(n\) which is equivalent to \(-1\) be a quadratic residue for each prime divisor of \(n\). Therefore, \(n = 2^a p_1 p_2\cdots p_k\) for \(a \in \{0,1\}\) and \(p_i \equiv 1 \pmod{4}\).
    \end{proof}

    The second lemma is a technical result that addresses the non-existence of cubic cyclic extensions of \(\QQ\), under some conditions considered in the proof of Theorem \ref{thm: main theorem}.

    \begin{lemma}\label{lemma: 3-cyclic inside Galois closure}
     Let \(K/\QQ\) be a Galois extension with Galois group \(\gal(K/\QQ)\) isomorphic to either \(S_4\) or \(\D_4\). Let \(L = K(\sqrt{\alpha})\) be a quadratic extension of \(K\) with \(\alpha \in K\), and let \(\widehat{L}\) be its Galois closure over \(\QQ\). Then, there does not exist a cubic cyclic extension \(F\) of \(\QQ\) that is contained in \(\widehat{L}\).
    \end{lemma}

    \begin{proof}
        First, we claim that \([\widehat{L}:K]\) is a power of 2. Indeed, let \(\sigma\) be an element of \(\gal(\widehat{L}/\QQ)\). The Galois conjugate of \(L\) by  \(\sigma\) is \(K(\sqrt{\sigma(\alpha)})\), also a quadratic extension of \(K\).  Therefore, since we know that \(\widehat{L}\) is the compositum of all Galois conjugates of \(L\) over \(\QQ\) and each of them are Galois extensions over \(K\), we have the equality
    \begin{equation*}
        [\widehat{L}:K] = \underset{\sigma'}{\prod}  [L^{\sigma'}:K],
    \end{equation*}
    where the product is taken over a maximal set of linearly disjoint Galois conjugates of \(L\) over \(\QQ\). Since each \([L^{\sigma'}:K]\) is 2, we conclude that \([L:K]\) is a power of 2.

    Finally, suppose that \(F/\QQ\) is a cyclic extension of degree 3 contained in \(\widehat{L}\). By the subgroup lattices of \(\Sym_4\) and \(\D_4\) it follows that \(K \cap F = \QQ\). Since both \(K\) and \(F\) are Galois extensions of \(\QQ\), they are linearly disjoint and it follows that their compositum \(KF\) satisfies \([KF:K] = 3\), contradicting the first claim.
    \end{proof}

\section{On elliptic curves with rational \(j\)-invariant}\label{section: lemmas on elliptic curves}

In this section, we review some results on elliptic curves with \(j\)-invariant in \(\QQ\setminus\{0,1728\}\).

Let \(K\) be a field of characteristic zero, and let \(E\) be an elliptic curve defined over \(K\) with \(j(E) \in \QQ\setminus\{0,1728\}\) of the following form:
\begin{equation*}
    E : y^2 = x^3 + Ax + B\;\;\;\; (A, B \in K).
\end{equation*}

Now, define the elliptic curve \(E'\) defined over \(\QQ\) as follows:
\begin{equation}\label{eq: rational quadratic twist}
    E' \colon y^2  = x^3 - \frac{27j(E)}{j(E) - 1728}x + \frac{54j(E)}{j(E) - 1728},
\end{equation}
and denote, for simplicity, its coefficients by 
\begin{equation*}
    A' \coloneqq - \frac{27j(E)}{j(E) - 1728},\text{ and }
    B' \coloneqq  \frac{54j(E)}{j(E) - 1728}.
\end{equation*}

By straight calculation, we see that \(j(E) = j(E')\). Hence, by the proof of \cite[Proposition III.1.4 (b)]{Silverman:1338326}, there exists an isomorphism
\begin{align}\label{eq: isomorphism_twist}
\begin{split}
      \phi\colon E &\overset{\sim}{\longrightarrow} E' \\
                  (x,y) &\longmapsto \left(\frac{x}{D},\frac{y}{D\sqrt{D}}\right), 
\end{split}
\end{align} 
where \(D \coloneq BA'/AB'\). Observe that \(\phi\) is defined over the field \(K(\sqrt{D})\) and we have the equalities 
\begin{equation}\label{eq: coefficients twist}
       A = A'/D^2,\text{ and }
       B = B'/D^3,
  \end{equation}
  that is, \(E'\) is the quadratic twist \(E^D\) of \(E\) by \(D\) (see \cite[Proposition X.5.4]{Silverman:1338326}). 

\begin{remark}\label{rmk:twisted points}
Under the above assumptions, suppose that \(\sqrt{D} \notin K\) and let \(P = (x,y) \in E(K)\). Then its image \(\phi(P) = (x/D, y/(D\sqrt{D}))\) lies in \(E'(K)\) if and only if \(y = 0\), that is, if and only if \(P \in E(K)[2]\). In particular, if \(P\) is not a point of order \(2\), then \(K(\sqrt{D}) = K(\phi(P))\).
\end{remark}

\begin{theorem}\label{proposition_principal}
    Let \(K\), \(E\), \(E'\), \(D\), and \(\phi\) be as defined above. If \(K(\sqrt{D}) = K(\sqrt{m})\) for some \(m \in K\), then there exists an elliptic curve \(\widetilde{E}\) defined over \(\QQ(m)\) that is \(K\)-isomorphic to \(E\).
\end{theorem}

  \begin{proof}
      The equality \(K(\sqrt{D}) = K(\sqrt{m})\) implies the existence of a pair \((a,b) \in K^2\) satisfying \(\sqrt{D} = a + b\sqrt{m}\). If \(a \neq 0\), then by subtracting \(a\) and taking squares on both sides, we get 
      \begin{equation*}
          \sqrt{D} = \frac{b^2m - a^2 - D}{2a} \in K,
      \end{equation*}
      which implies, by the isomorphism \eqref{eq: isomorphism_twist}, that \(E'\) is already \(K\)-isomorphic to \(E\). Now, suppose \(a = 0\), that is, \[D = b^2m.\]
      From the relations \eqref{eq: coefficients twist} we get
          \begin{align*}
              Ab^4 = \frac{Ab^4m^2}{m^2} =\frac{AD^2}{m^2} = \frac{A'}{m^2} &\in \QQ(m),\text{ and }\\
              Bb^6 = \frac{Bb^6m^3}{m^3} = \frac{BD^3}{m^3} = \frac{B'}{m^3} &\in \QQ(m).
          \end{align*}
      Consider the elliptic curve \(\widetilde{E}\) defined over \(\QQ(m)\) as follows:
      \begin{equation*}
          \widetilde{E}: y^2 = x^3 + Ab^4x + Bb^6.
      \end{equation*}
      Since we also have \(j(\widetilde{E}) = j(E)\), by \cite[Proposition III.1.4 (b)]{Silverman:1338326}, there exists an isomorphism \(\psi: E \to \widetilde{E}\) defined over \(K(b) = K\).
  \end{proof}

    The following theorem is the main result that relates the torsion points of \(E(K)\) with the torsion points of \(E'(K)\):

    \begin{theorem}[Theorem 3 and Corollary 4 of \cite{gonzalesjimenesalone}]\label{twist_torsion}
        Let \(E\) be an elliptic curve over a field \(K\) of characteristic zero, \(D\) a non-square element of \(K\), and \(E^D\) the quadratic twist of \(E\) by \(D\). There exist a pair of homomorphisms
        \begin{equation*}
        E(K(\sqrt{D})) \overset{\Psi}{\longrightarrow} E(K) \times E^D(K) \overset{\overline{\Psi}}{\longrightarrow} E(K(\sqrt{D}))
    \end{equation*}
    such that \(\overline{\Psi} \circ \Psi = [2]\) and \(\Psi \circ \overline{\Psi} = [2] \times [2]\). Moreover, for \(n\) odd, there exists an isomorphism 
    \begin{equation*}
        E(K(\sqrt{D}))[n] \cong E(K)[n] \times E^D(K)[n].
    \end{equation*}
    \end{theorem}

 \begin{lemma}[Lemma 2.4 of \cite{LH}]\label{conjugate_torsion}
    Let \(E\) be an elliptic curve defined over \(\QQ\), \(L\) an extension of \(\QQ\), \(\widehat{L}\) its Galois closure over \(\QQ\), and \(\sigma \in \gal(\widehat{L}/\QQ)\) an automorphism of \(\widehat{L}\). Then, the following map is a group isomorphism:
    \begin{align*}
        E(L) &\overset{\sim}{\longrightarrow} E(\sigma(L)) \\
                  (x,y)       &\longmapsto (\sigma(x), \sigma(y)).
    \end{align*}
\end{lemma}

\begin{notation}
        Let \(G\) and \(H\) be groups. For the sake of simplicity, we write \(G \subseteq E(K)\) and \(E(K) \subseteq H\) when \(E(K)\) has a subgroup isomorphic to \(G\) and \(E(K)\) is isomorphic to a subgroup of \(H\), respectively.
    \end{notation}

\begin{lemma}[Lemmas 3.7 to 3.9 of \cite{LH}]\label{Lemma: degree Galois closure}
    Let \(K\) be a Galois extension of \(\QQ\), \(E\) an elliptic curve defined over \(K\) with \(j(E) \in \QQ\setminus\{0,1728\}\), and \(D \in K\) such that \(E^D\) is an elliptic curve defined over \(\QQ\).
    \begin{enumerate}
        \item[(1)]  If \(\ZZ/n\ZZ \subseteq E(K)\) and \((\ZZ/n\ZZ) \oplus (\ZZ/n\ZZ) \subseteq E^D(K(\sqrt{D}))\) for some integer \(n \geq 3\), then \(K(\sqrt{D})/\QQ\) is a Galois extension.
    \end{enumerate}
    Now, suppose \(K(\sqrt{D})/\QQ\) is not a Galois extension and let \(\widehat{K(\sqrt{D})}\) be its Galois closure over \(\QQ\)\(:\)
    \begin{enumerate}
        \item[(2)] If \(\ZZ/p^n\ZZ \subseteq E(K)\) and \(\ZZ/p\ZZ \not\subseteq E^D(K)\) for some odd prime \(p\) and some positive integer \(n\), then \([\widehat{K(\sqrt{D})}:K] = 4\), and \((\ZZ/p^n\ZZ) \oplus (\ZZ/p^n\ZZ) \subseteq E^D(\widehat{K(\sqrt{D})})\).
        \item[(3)] If \(\ZZ/2^n\ZZ \subseteq E(K)\) for some integer \(n \geq 2\), \(E^D(K)[2^{\infty}] \subseteq (\ZZ/2\ZZ)\oplus(\ZZ/2\ZZ)\), then \([\widehat{K(\sqrt{D})}:K] = 4\), and \((\ZZ/2^{n-1}\ZZ) \oplus (\ZZ/2^{n-1}\ZZ) \subseteq E^D(\widehat{K(\sqrt{D})})\).
    \end{enumerate}
\end{lemma}

\section{Proof of Theorem \ref{thm: main theorem}}\label{section: proof of the main theorem}

Let \(K\) be a quartic extension of \(\QQ\), \(E\) be a an elliptic curve defined over \(K\) with \(j(E) \in \QQ\setminus\{0,1728\}\), and \(E'\) be the elliptic curve over \(\QQ\) as defined in \eqref{eq: rational quadratic twist}, and let \(L\) be the quadratic extension of \(K\) such that there exists an isomorphism \(\phi\), defined over \(L\), between \(E\) and \(E'\) as in \eqref{eq: isomorphism_twist}. 

Now, comparing the list of \(\Phi(4)\) in Theorem \ref{thm:DF} with the list in the statement of Theorem \ref{thm: main theorem}, we see that it suffices to prove that the following groups do not occur as torsion subgroups of elliptic curves with \(j(E) \in \QQ\):
    \begin{align}\label{list: groups to study}
         & \ZZ/N_1\ZZ,     & \hspace{1cm}   & N_1 = 11, 14, 18, 22, \nonumber\\
    &\ZZ/2\ZZ \oplus \ZZ/2N_2\ZZ, &\hspace{1cm}  & N_2 = 7, 9,\\
    &\ZZ/3\ZZ \oplus \ZZ/9\ZZ.\nonumber
    \end{align}

    Moreover, none of the above cases appears in the list \(\Phi^{\textbf{CM}}(4)\) of Theorem \ref{thm:CM}. Therefore, from now on we may assume that \(E\) does not have complex multiplication. In particular, \(j(E) \not\in \{0, 1728\}\).

    This will require the following three lemmas:

    \begin{lemma}
        \(E(K)_{\tors}\) has no points of order \(11\).
    \end{lemma}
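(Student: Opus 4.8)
The plan is to transport the hypothetical point of order $11$ from $E$ to the rational model $E'$ and then contradict the degree list of \cref{exact_degree}. Suppose, for contradiction, that $P \in E(K)$ has order $11$. The isomorphism $\phi\colon E \overset{\sim}{\to} E'$ of \eqref{eq: isomorphism_twist} is defined over $L = K(\sqrt{D})$, so the image $P' \coloneq \phi(P)$ lies in $E'(L)$; since $\phi$ is a group isomorphism, $P'$ again has order $11$. The curve $E'$ given by \eqref{eq: rational quadratic twist} is defined over $\QQ$, so this produces a point of order $11$ on a \emph{rational} elliptic curve whose coordinates lie in $L$, which is exactly the situation governed by \cref{exact_degree}.

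Next I would bound the degree of the field of definition of $P'$. By construction $L = K(\sqrt{D})$ with $[K:\QQ] = 4$, so $[L:\QQ] \in \{4,8\}$ and in particular $[L:\QQ] \mid 8$. Because $P' \in E'(L)$, its field of definition satisfies $\QQ(P') \subseteq L$, and since $\QQ(P')$ is an intermediate field of $L/\QQ$, the tower law gives $[\QQ(P'):\QQ] \mid [L:\QQ]$; hence $[\QQ(P'):\QQ] \in \{1,2,4,8\}$. On the other hand, applying \cref{exact_degree} with $p = 11$, the degree of the field of definition of any point of order $11$ on an elliptic curve over $\QQ$ must belong to $\{5,10,20,40,55,80,100,110,120\}$. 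These two sets are disjoint, so we reach a contradiction and conclude that $E(K)_{\tors}$ has no point of order $11$.

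There is no deep obstacle in this argument — it is the most direct of the three lemmas, relying only on moving the point to $E'$ and reading off \cref{exact_degree}. The single step that must be handled with care is the divisibility: the mere inequality $[\QQ(P'):\QQ] \le [L:\QQ] \le 8$ would \emph{not} suffice, because the smallest admissible degree, $5$, is itself at most $8$. What makes the argument work is that $5$ divides neither $4$ nor $8$ (and every other admissible degree already exceeds $8$), so it is essential to use that $\QQ(P')$ is genuinely an intermediate field of $L/\QQ$, giving the divisibility $[\QQ(P'):\QQ] \mid [L:\QQ]$ rather than just the weaker bound. With that divisibility in place, the numerics are immediate and uniform across the two cases $[L:\QQ]=4$ and $[L:\QQ]=8$.
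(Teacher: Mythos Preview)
Your proof is correct and follows the same line as the paper's own argument: push the hypothetical order-$11$ point through $\phi$ to $E'(L)$, observe that $[\QQ(\phi(P)):\QQ]$ must then divide $[L:\QQ]\mid 8$, and contradict the list in \cref{exact_degree} for $p=11$. Your added remark that divisibility (not merely the bound $\le 8$) is what rules out the value $5$ is a helpful clarification, but otherwise the two proofs coincide.
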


    \begin{proof}
        Suppose \(P_{11} \in E(K)_{\tors}\) is a point of order 11, and let \(\phi(P_{11})\) be its image in \(E'(L)_{\tors}\). By Theorem \ref{exact_degree}, we know that \([\QQ(\phi(P_{11})):\QQ]\) does not divide 8, hence it is impossible that \(\QQ(P_{11}) \subseteq L\).
    \end{proof}

    \begin{lemma}
        If \(E(K)_{\tors}\) has a point of order \(7\), then \(L = K(\sqrt{m})\) for some \(m \in \QQ\).
    \end{lemma}

    \begin{proof}
    Suppose \(P_7 \in E(K)_{\tors}\) is a point of order \(7\), and let \(\phi(P_7)\) denote its image in \(E'(L)_{\tors}\). Then, by Theorem \ref{exact_degree} and the fact that \([L:\QQ]\) divides \(8\), we have \([\QQ(\phi(P_7)):\QQ] = 1\) or \(2\). 

    By the description of the isomorphism \(\phi\) in \eqref{eq: isomorphism_twist}, it follows that \(L = K(\phi(P_7))\). Hence \(L = K(\sqrt{m})\) for some \(m \in \QQ\).
    \end{proof}

    \begin{lemma}\label{9-torsion}
        If \(E(K)_{\tors}\) has a point of order \(9\), then \(L = K(\sqrt{m})\) for some \(m \in \QQ\).
    \end{lemma}

 To prove this Lemma, we  need the following consequence of the Weil pairing.

     \begin{proposition}[Adapted from Corollary III.8.1.1 of \cite{Silverman:1338326}]\label{cn+cn_contained}
      Let \(E\) be an elliptic curve defined over a field \(K\) of characteristic zero. If \(E(K)_{\tors}\) contains \(E[n]\), then \(K\) contains \(\QQ(\zeta_n)\). In particular, if the extension \(K/\QQ\) is finite, then \(\varphi(n)\) divides \([K:\QQ]\).
      \end{proposition}

\begin{proof}[Proof of Lemma \ref{9-torsion}]

Let \(P_9 \in E(K)[9]\) be a point of order \(9\), and let \(\phi(P_9)\) denote its image in \(E'(L)[9]\). By Proposition \ref{cn+cn_contained} and the fact that \([\QQ(\zeta_9):\QQ] = \varphi(9) = 6\) does not divide \(8\), we have \(E'(L)[9] \subseteq \ZZ/3\ZZ \oplus \ZZ/9\ZZ\). On the other hand, by Theorem \ref{twist_torsion}, the group \(E'(K)[9]\), and hence \(E'(K)[3]\), is isomorphic to either \(\{\OO\}\) or \(\ZZ/3\ZZ\). We consider four cases, depending on whether \(K/\QQ\) is a Galois extension or not and whether \(E'(K)[3]\) is \(\{\OO\}\) or isomorphic to \(\ZZ/3\ZZ\).

\textbf{Case 1: \(K/\QQ\) is a Galois extension.}

\begin{itemize}
\item[(1.1)] Suppose \(E'(K)[3] = \{\OO\}\). Then, by Theorem \ref{twist_torsion}, \(E'(L)[9] \cong E(K)[9]\), and by Remark \ref{rmk:twisted points}, the point \(\phi([3]P_9)\) does not belong to \(E'(K)[9]\). Hence \(L = K([3]\phi(P_9))\). We consider two cases:

\begin{itemize}
\item[i.] If \(L/\QQ\) is a Galois extension, then since \(E'(L)[9]\) is isomorphic to either \(\ZZ/3\ZZ \oplus \ZZ/9\ZZ\) or \(\ZZ/9\ZZ\), the subgroup generated by \([3]\phi(P_9)\) is invariant under the action of \(\gal(L/\QQ)\). By Lemma \ref{Lemma: degree of definition, isogeny}, the degree \([\QQ([3]\phi(P_9)):\QQ]\) divides \(2\). Therefore, \(\QQ([3]\phi(P_9)) = \QQ(\sqrt{m})\) for some \(m \in \QQ\), and hence \(L = K(\sqrt{m})\).

\item[ii.] If \(L/\QQ\) is not a Galois extension, then by Lemma \ref{Lemma: degree Galois closure}, we have \([\widehat{L}:\QQ] = 16\) and \(\ZZ/9\ZZ \oplus \ZZ/9\ZZ \subseteq E'(\widehat{L})\). However, \([\widehat{L}:\QQ]\) must also be divisible by \(\varphi(9) = 6\), which is a contradiction.
\end{itemize}

\item[(1.2)] Suppose \(E'(K)[3] = \langle Q_3 \rangle \cong \ZZ/3\ZZ\). By Theorem \ref{twist_torsion} and Lemma \ref{Lemma: degree Galois closure} (1), the extension \(L/\QQ\) is Galois. Moreover, by Lemma \ref{Lemma: degree of definition, isogeny}, the degree \([\QQ(Q_3):\QQ]\) divides \(2\). If \([\QQ(Q_3):\QQ] = 2\) and \(\QQ(Q_3) = \QQ(\sqrt{r}) \subseteq K\) for some \(r \in \QQ\), then we take the quadratic twist of \(E'\) by \(r\). This yields an elliptic curve \(E^{'r}\) defined over \(\QQ\) that is \(L\)-isomorphic to \(E\) and satisfies \(E^{'r}(\QQ)[3] \neq \{\OO\}\). Thus, we may assume without loss of generality that \(Q_3 \in E'(\QQ)[3]\).

By Theorem \ref{twist_torsion},
\[
E'(L)[9] \cong \ZZ/9\ZZ \oplus \ZZ/3\ZZ \cong \langle \phi(P_9) \rangle \oplus \langle Q_3 \rangle.
\]
By Lemma \ref{Lemma: degree of definition, isogeny} (2), the degree \([\QQ(\phi(P_9)):\QQ]\) divides \(2\), so \(\QQ(\phi(P_9)) = \QQ(\sqrt{m})\) for some \(m \in \QQ\). Finally, by Remark \ref{rmk:twisted points}, \(L = K(\phi(P_9))\), and hence \(L = K(\sqrt{m})\).
\end{itemize}

\textbf{Case 2: \(K/\QQ\) is not Galois.}

Let \(\widehat{K}\) be the Galois closure of \(K\) over \(\QQ\). Then \(\gal(\widehat{K}/\QQ)\) is isomorphic to either \(S_4\), \(A_4\), or \(D_4\). We consider each possibility.

\medskip
\noindent
\textbf{Subcase} \(\gal(\widehat{K}/\QQ) \cong S_4\).

\begin{itemize}
\item[(2.1)] Suppose \(E'(K)[3] \cong \ZZ/3\ZZ\). By Theorem \ref{twist_torsion}, \(E'(L)[3] \cong \ZZ/3\ZZ \oplus \ZZ/3\ZZ\). Then Lemma \ref{cn+cn_contained} implies that \(\QQ(\sqrt{-3}) \subseteq L\). Since \(K/\QQ\) has no nontrivial intermediate fields, it follows that \(L = K(\sqrt{-3})\).

\item[(2.2)] Suppose \(E'(K)[3] = \{\OO \}\).
\begin{itemize}
    \item[i.] If \(E'(L)[9] \cong \ZZ/3\ZZ \oplus \ZZ/9\ZZ\), then, as in (2.1), we obtain \(L = K(\sqrt{-3})\).

    \item[ii.] If \(E'(L)[9] \cong \ZZ/9\ZZ\), we show that \(\QQ([3]\phi(P_9))/\QQ\) is Galois. Suppose otherwise. Then there exists \(\sigma \in \gal(\widehat{L}/\QQ)\) such that
\[
\QQ([3]\phi(P_9))^\sigma \neq \QQ([3]\phi(P_9)).
\]
By Theorem \ref{conjugate_torsion}, the point \(\phi(P_9)^\sigma \in E'(L^\sigma)\) also has order \(9\). We now prove that \(\phi(P_9)\) and \(\phi(P_9)^\sigma\) generate \(E'[9]\). Indeed, if \(a\) and \(b\) are integers such that \(a\phi(P_9) + b\phi(P_9)^\sigma = \OO\), then both terms lie in \(E'(L \cap L^\sigma)\). Since \(\QQ([3]\phi(P_9)) \not\subseteq L \cap L^\sigma\), we have \(E'(L \cap L^\sigma)[3] = \{\OO\}\), which forces \(a \equiv b \equiv 0 \pmod{9}\). Hence
\[
E'[9] = \langle \phi(P_9), \phi(P_9)^\sigma \rangle \subseteq E'(\widehat{L}).
\]
By Lemma \ref{cn+cn_contained}, this implies that \(\QQ(\zeta_9) \subseteq \widehat{L}\), contradicting Lemma \ref{lemma: 3-cyclic inside Galois closure} since \(\gal(\widehat{K}/\QQ) \cong S_4\). 

Therefore, \(\QQ([3]\phi(P_9))/\QQ\) is Galois. By Lemma \ref{Lemma: degree of definition, isogeny} (1), its degree divides \(2\), so \(\QQ([3]\phi(P_9)) = \QQ(\sqrt{m})\) for some \(m \in \QQ\). Finally, by Remark \ref{rmk:twisted points}, \(L = K(\sqrt{m})\).
\end{itemize}
\end{itemize}

\medskip
\noindent
\textbf{Subcase} \(\gal(\widehat{K}/\QQ) \cong A_4\).

\begin{itemize}
\item[(2.3)] Suppose \(E'(K)[3] \cong \ZZ/3\ZZ\). As in (2.1), we conclude that \(L = K(\sqrt{-3})\).

\item[(2.4)] Suppose \(E'(K)[3] = \{\OO\}\). Let \([3]P_9 = (x,y) \in E(K)\). By the definition of \(\phi\) in \eqref{eq: isomorphism_twist}, the \(x\)-coordinate \(x(\phi([3]P_9))\) lies in \(K\). Since \(K\) has no nontrivial subfields, either \(x(\phi([3]P_9)) \in \QQ\) or \(K = \QQ(x(\phi([3]P_9)))\).

If \(x(\phi([3]P_9)) \in \QQ\), then \([\QQ(\phi([3]P_9)):\QQ] = 2\), so \(\QQ(\phi([3]P_9)) = \QQ(\sqrt{m})\) for some \(m \in \QQ\). By Remark \ref{rmk:twisted points}, it follows that \(L = K(\sqrt{m})\).

Otherwise, \(K = \QQ(x(\phi([3]P_9)))\), and hence \(L = \QQ([3]\phi(P_9))\). In particular, \(\widehat{K} \subseteq \QQ(E'[3])\). By \cite[Theorem 3.2]{GonzalezJimenez2016GrowthOT}, the Galois group \(\gal(\QQ(E'[3])/\QQ)\) is isomorphic to one of \(GL_2(\FF_3)\), \texttt{3Ns}, \texttt{3B}, or \texttt{3Nn}, where the latter three groups, in the notation introduced in \cite[Section 6.4]{Sutherland}, are isomorphic to \(D_4\), \(D_6\), and \(SD_{16}\), respectively. None of these groups admits a quotient isomorphic to \(A_4\), yielding a contradiction.
\end{itemize}

\medskip
\noindent
\textbf{Subcase} \(\gal(\widehat{K}/\QQ) \cong D_4\).

\begin{itemize}
\item[(2.5)] Suppose \(E'(K)[3] = \{\OO\}\).
\begin{itemize}
\item[i.] If \(E'(L)[9] \cong \ZZ/9\ZZ\), then, since Lemma \ref{lemma: 3-cyclic inside Galois closure} also applies in this setting, the same argument as in (2.2) ii shows that \(\QQ([3]\phi(P_9))/\QQ\) is a Galois extension. The remainder of the proof proceeds as in (2.2) ii.

\item[ii.] If \(E'(L)[9] \cong \ZZ/3\ZZ \oplus \ZZ/9\ZZ\), then by Theorem \ref{twist_torsion} we have \(E(K)[9] \cong \ZZ/3\ZZ \oplus \ZZ/9\ZZ\), and by Lemma \ref{cn+cn_contained}, \(\QQ(\sqrt{-3})\) is the unique nontrivial intermediate field of \(K/\QQ\). Moreover, \(\QQ(E'[3]) \subseteq L\), and we consider all possible values of \([\QQ(E'[3]):\QQ]\).

\begin{itemize}
\item[-] If \([\QQ(E'[3]):\QQ] \in \{1,2\}\), then \(\QQ(E'[3]) = \QQ(\sqrt{m})\) for some \(m \in \QQ\), and by Remark \ref{rmk:twisted points}, it follows that \(L = K(\sqrt{m})\).

\item[-] If \([\QQ(E'[3]):\QQ] = 4\), then \(\QQ(E'[3])/\QQ\) is an abelian extension with Galois group isomorphic to either \(\ZZ/4\ZZ\) or \((\ZZ/2\ZZ)^2\). In either case, there exists a quadratic field \(\QQ(\sqrt{m}) \subseteq L\), with \(m \in \QQ\), distinct from \(\QQ(\sqrt{-3})\). Since \(\QQ(\sqrt{m}) \not\subseteq K\), we conclude that \(L = K(\sqrt{m})\).

\item[-] If \([\QQ(E'[3]):\QQ] = 8\), then \(\widehat{K} = L\), and by the subgroup lattice of \(D_4\) and Galois theory, it follows that \(L = K(\sqrt{m})\) for some \(m \in \QQ\).
\end{itemize}
\end{itemize}

\item[(2.6)] Suppose \(E'(K)[3] = \langle Q_3 \rangle \cong \ZZ/3\ZZ\). 
\begin{itemize}
    \item[i.] If \([\QQ(Q_3):\QQ] = 4\), that is, \(K = \QQ(Q_3)\), then by Theorem \ref{twist_torsion}, \(L = \QQ(E'[3])\) and \(L/\QQ\) is a Galois extension with Galois group isormophic to \(D_4\). The subgroup lattice of \(D_4\) implies that \(L = K(\sqrt{m})\) for some \(m \in \QQ\).
    \item[ii.] If \([\QQ(Q_3):\QQ] \in \{1,2\}\), then \(\QQ(Q_3) = \QQ(\sqrt{r})\) for some \(r \in \QQ\). Taking the quadratic twist of \(E'\) by \(r\) if necessary, we may assume without loss of generality that \(Q_3 \in E'(\QQ)[3]\). Then, \(\QQ([3]\phi(P_9)) = \QQ(E'[3])\) is a Galois extension of \(\QQ\) and 
    \[
    E'(\QQ([3]\phi(P_9)))[3] = \left<[3]\phi(P_9) \right> \oplus \left<Q_3\right>.
    \] 
    By Lemma \ref{Lemma: degree of definition, isogeny} (2), \([\QQ([3]\phi(P_9)):\QQ]\) divides \(2\), and \(\QQ([3]\phi(P_9)) = \QQ(\sqrt{m})\) for some \(m \in \QQ\). By Remark \ref{rmk:twisted points}, \(L = K(\sqrt{m})\).
\end{itemize}
\end{itemize}
\end{proof}

    Finally, by Theorem \ref{proposition_principal}, if \(E(K)_{\tors}\) contains a point of order \(7\) or \(9\), then \(E\) is \(K\)-isomorphic to an elliptic curve \(\widetilde{E}\) defined over \(\QQ\). In particular, \(E(K)_{\tors}\) is isomorphic to one of the groups in \(\Phi_{\QQ}(4)\) listed in Theorem \ref{chou_list}. It follows that none of the groups in \eqref{list: groups to study} lies in \(\Phi_{j \in \QQ}(4)\).

\subsection{Examples of elliptic curves for each torsion structure}

To complete the proof of Theorem \ref{thm: main theorem}, we need to show elliptic curves having each torsion structure listed in Theorem \ref{thm: main theorem}. Except for \(\ZZ/17\ZZ\) and \(\ZZ/21\ZZ\), all the other groups have already appeared as torsion subgroups of elliptic curves defined over \(\QQ\), as shown in Theorem \ref{chou_list}.

We now present two examples of elliptic curves with \(j\)-invariant in \(\QQ\) whose torsion subgroups are isomorphic to \(\ZZ/17\ZZ\) and \(\ZZ/21\ZZ\), respectively.

\begin{example}[Elliptic curve with a 17-torsion, D. Jeon \cite{2023arXiv230505851J}]
    Consider the quartic cyclic number field \(K = \QQ[X]/(f(X))\), where \(f(X) = 2X^4 - 5X^3 - 7X^2 + 10X + 8\), and let \(\alpha\) be one of the roots of \(f(X)\) in \(K\). Define the constants \(b\) and \(c\) as follows:
    \begin{align*}
          c &= -\frac{1337}{32}\alpha^3 + \frac{3233}{64}\alpha^2 + \frac{12285}{64}\alpha + \frac{1135}{16},\\
          b &= -\frac{59}{16}\alpha^3 + \frac{147}{32}\alpha^2 + \frac{535}{32}\alpha + \frac{45}{8}.
    \end{align*}
    Consider the elliptic curve \(E = E(b, c)\), given by the equation
    \begin{equation*}
        E: y^2 + (1-c)xy - by = x^3 - bx.
    \end{equation*}
    Using MAGMA \cite{MAGMA}, it can be verified that \(j(E) = -882216989/131072\), and its torsion subgroup \(E(K)_{\tors}\) is isomorphic to \(\ZZ/17\ZZ\).
\end{example}

\begin{example}[Elliptic curve with a 21-torsion, P. L. Clark, P. Corn, A. Rice and J. Stankewicz \cite{clark_corn_rice_stankewicz_2014}]
    Consider the quartic number field \(K = \QQ[X]/(f(X))\), where \(f(X) = X^4 - X^3 + 2X + 1\), and let \(\alpha\) be one of the roots of \(f(X)\) in \(K\). Consider the elliptic curve \(E\), given by the equation
    \begin{equation*}
        E: y^2 = x^3 - (371952\alpha^3 + 3373488\alpha^2 + 3777840\alpha + 1228608).
    \end{equation*}
    Then, \(j(E) = 0\) and \(E(K)_{\tors} \cong \ZZ/21\ZZ\). 
\end{example}

\subsection*{Acknowledgments}
The author is deeply grateful to Yuichiro Taguchi and Yuto Nagashima for their insightful discussions and valuable comments on an earlier version of this paper. The author also thanks the anonymous reviewer for a careful reading and detailed suggestions, which significantly improved the presentation of the paper.


\normalsize

\end{document}